\documentclass[11pt,reqno,oneside]{amsart}
\usepackage[colorlinks,
linkcolor=blue,
anchorcolor=blue,
citecolor=blue,
]{hyperref}
\usepackage{slashed}
\usepackage{amscd}
\usepackage{amsmath}
\usepackage{latexsym}
\usepackage{amsfonts}
\usepackage{amssymb}
\usepackage{amsthm}
\usepackage{graphicx}
\usepackage{verbatim}
\usepackage{mathrsfs}
\usepackage{enumerate}
\usepackage{fullpage}
\usepackage{xcolor}
\usepackage{colonequals}
\usepackage{amsthm}
\usepackage{hyperref}

\pdfstringdefDisableCommands{%
  \def\lambda{lambda}%
  \def\Lambda{Lambda}%
  \def\mu{mu}%
  \def\alpha{alpha}%
  \def\beta{beta}%
}

\oddsidemargin .5cm \evensidemargin .5cm \marginparwidth 40pt
\marginparsep 10pt \topmargin 0.32cm
\headsep1pt
\headheight 0pt
\textheight 9.1in
\textwidth 6in
\sloppy

\setlength{\parskip}{8pt}

\newtheorem{theorem}{Theorem}[section]
\newtheorem{corollary}[theorem]{Corollary}
\newtheorem{lemma}[theorem]{Lemma}
\newtheorem{proposition}[theorem]{Proposition}
 \theoremstyle{definition}

	\newtheorem{remark}{Remark}[section]
	\newtheorem{conjecture}{Conjecture}[section]

\numberwithin{equation}{section}

\begin{document}
	
\title{An improved upper bound for the second eigenvalue on tori}

\author{Fan Kang}
\address{Yau Mathematical Sciences Center, Tsinghua University, 100084, Beijing, China}
\email{fankangmath@163.com}
	
\begin{abstract}
In this paper, we study the maximization problem of the second non-zero Laplace eigenvalue $\lambda_2(T,g)$ on a torus $T$, among all unit-area metrics in a fixed conformal class. First, we obtain a new upper bound for $\lambda_2(T_{a,b},g)$ on any flat torus $T_{a, b}$ with $(a, b)\in \mathbb{R}^2$. Our bound improves the general estimate $\lambda_2(T_{a, b},g)\le 4A_c(T_{a, b}, [g])$ obtained in \cite{KS24, EG25} in the case of the torus. As applications, we derive a uniform upper bound $\lambda_2(T,g)< \frac{16\pi^2}{\sqrt{3}}$  for any torus $T$ and any metric $g$, and reduce the Kao-Lai-Osting conjecture to proving an upper bound for $\lambda_2(T_{a,b},g)$ on the specific family of flat tori $T_{a,b}$ with $0\leq a\leq \frac12$ and  $\sqrt{1-a^2}\leq b\leq 1.76$.
\end{abstract}

\maketitle
	
\section{Introduction} 

Let $(M, g)$ be a closed surface endowed with a Riemannian metric $g$, and let $\Delta_g$ denote the Laplace-Beltrami operator on $(M,g)$. It is well known that the spectrum of $\Delta_g$ satisfies
\[
 0=\lambda_{0}(M,g)<\lambda_{1}(M,g)\leq\lambda_{2}(M,g)\leq\dots\leq\lambda_{k}(M,g)\leq\dots\to+\infty,
\]
where each eigenvalue is repeated according to its multiplicity. The normalized eigenvalue functional of $g$ is defined by
\[  
\bar{\lambda}_k(M,g)=\lambda_k(M,g)\operatorname{Area}(M,g),
\]
which is a scale-invariant quantity. For $k=1$, the Yang-Yau inequality~\cite{YY80} gives $\bar{\lambda}_1(M,g)\leq 8\pi\left(\gamma+1\right)$, where $\gamma$ is the genus of $M$. On the other hand, for general $k\geq 1$, Korevaar~\cite{K93} proved that there exists a constant $C(M)$ depending only on the topology of $M$ such that $\bar{\lambda}_k(M,g)\leq C(M)k$. 
Moreover, given a conformal class $[g]=\{\omega g: \omega>0\}$ on $M$, one can define the conformal eigenvalue of $(M,[g])$ by
\[
\Lambda_k(M, [g])=\sup_{g'\in [g]}\bar{\lambda}_k(M,g'),
\]
and the topological eigenvalue of $M$ by
\[
\Lambda_k(M)=\sup_{[g]}\Lambda_k(M,[g]),
\]
where the supremum is taken over all conformal classes $[g]$ on $M$. Such notions were first introduced by Colbois-El~Soufi~\cite{CE03}.
In this paper, we focus on $\Lambda_2$ on tori.

Below, we recall several classical and recent results on $\Lambda_k(M)$ for various surfaces $M$. 
For the sphere, Hersch~\cite{H70} proved that $\Lambda_1(S^2)=8\pi$, with equality attained only by the round metric of constant curvature. Karpukhin-Nadirashvili-Penskoi-Polterovich~\cite{KNPP17} later showed that $\Lambda_k(S^2)=8\pi k$ for all $k\geq 1$, and that the supremum is obtained on a sequence of metrics converging to a union of $k$ touching round spheres. For the real projective plane, it is known from Li-Yau~\cite{LY82} that $\Lambda_1(\mathbb{RP}^2)=12\pi$, with equality attained only by the round metric of constant curvature. For general $k\geq 1$, Karpukhin~\cite{K21} showed that $\Lambda_k(\mathbb{RP}^2)=4\pi(2k+1)$, and that the supremum is achieved on a sequence of metrics converging to a union of $k-1$ identical round spheres and a standard projective plane touching each other, with area ratio $3\!:\!2$ between the projective plane and each sphere. Next, considering the torus, $\Lambda_1(T)=\frac{8\pi^2}{\sqrt{3}}$ is given by Nadirashvili~\cite{N96}, and the flat equilateral metric is the only maximizer; see also~\cite{CKM19, K25}. Furthermore, on the Klein bottle, El~Soufi-Giacomini-Jazar~\cite{EGJ06} and Jakobson-Nadirashvili-Polterovich~\cite{JNP06} showed that the maximizer of $\Lambda_1(\mathbb{KL}^2)$ is the metric induced by the unique minimal immersion into $S^n$ via the first eigenfunctions; see also~\cite{CKM19, N96}. Finally, it is worth mentioning that $\Lambda_1(\Sigma_2) = 16\pi$ was established by Nayatani-Shoda~\cite{NS19} for surfaces of genus two.
	
Considering a fixed conformal class, the value of $\Lambda_k(M,[g])$ is known in several cases. El~Soufi-Ilias-Ros~\cite{EIR97} and El~Soufi-Ilias~\cite{EI02} determined the exact value of $\Lambda_1(T_{a,b},[g_{a,b}])$ for the torus $T_{a,b}=\mathbb{R}^2/\mathbb{Z}(1,0)\oplus\mathbb{Z}(a,b)$ with $a^2+b^2=1$. Besides, these works also provided an upper bound for $\bar{\lambda}_1$ for all other tori. On the other hand, numerical computations for $\Lambda_k(T,[g])$ with $k\geq 1$ were obtained by Kao-Lai-Osting~\cite{KLO17} using finite element and spectral methods. More generally, for any closed surface $(M,g)$, Karpukhin-Stern~\cite{KS24} and Eddaoudi-Girouard~\cite{EG25} independently proved that 
\begin{equation}\label{old Lambda-2}
\Lambda_2(M, [g])\leq 4A_c(M, [g]),
\end{equation}
where $A_c(M, [g])$ denotes the conformal area. 

In this paper, we focus on the torus. The conformal area of the torus was determined explicitly by Li-Yau~\cite{LY82}, Montiel-Ros~\cite{MR86}, and Bryant~\cite{B15}. Combining these results with Nadirashvili's folding method~\cite{N02}, we obtain a new upper bound for the second conformal eigenvalue of the torus. This bound improves the general estimate~\eqref{old Lambda-2} in the case of tori. To state our result, we first recall the parametrization of flat tori.
	
Let $T$ be a torus equipped with a Riemannian metric $g$. By the uniformization theorem, any torus $(T,g)$ is conformally equivalent to a flat torus $(T_{a, b},\, g_{a, b})$ for some $(a, b)\in \mathbb{R}^2$, 
where 
\[
T_{a, b}=\mathbb{R}^2/\Gamma_{a, b}\quad \text{with}\quad \Gamma_{a,b}=\mathbb{Z}(1,0)\oplus\mathbb{Z}(a,b),
\]
and $g_{a, b}$ is the metric on $\mathbb{R}^2/\Gamma_{a, b}$ induced by the Euclidean metric on $\mathbb{R}^2$. In addition, up to isometry and dilations, there is a one-to-one correspondence between the moduli space of flat tori $T_{a,b}$ 
and the fundamental region 
\[
\mathscr{M}=\Big\{(\tilde a, \tilde b)\in\mathbb{R}^2:\ 0\leq \tilde a \leq \tfrac12,\ \tilde b\geq\sqrt{1-\tilde a^2}\Big\}.
\]
Moreover, it is well known that the eigenvalues of the Laplacian on $(T_{a,b},\, g_{a, b})$ are given by
\[
\lambda^{a,b}_{pq}=4\pi^2\Big\{q^2+\left(\frac{p-aq}{b}\right)^2\Big\}
\]
for $(p,q)\in\big\{(p,q)\in\mathbb{Z}\times\mathbb{Z}: q\geq 0, \text{ or } q=0 \text{ and } p\geq 0\big\}$, and the corresponding eigenspaces are spanned by
\[
\begin{aligned}
f^{a,b}_{pq}(x,y)=\cos{2\pi\big(qx+\tfrac{p-aq}{b} y\big)}\quad \text{and}\quad 
g^{a,b}_{pq}(x,y)=\sin{2\pi \big(qx+\tfrac{p-aq}{b} y\big)}.
\end{aligned}  
\]

Our main result is the following.

\begin{theorem}\label{main thm}
Let $(a,b)\in\mathscr{M}$ and let $g$ be a Riemannian metric on $T_{a,b}$ conformal to the flat metric $g_{a,b}$. Then
\begin{equation}\label{main estimate}
\bar{\lambda}_2(T_{a,b},g)< \frac{16\pi^2}{3\sqrt{6}\,b}\frac{\sqrt{2+a^2+b^2+S}}{a^2+b^2+S}\left(3(a^2+b^2)+S \right),
\end{equation}
where $S=\sqrt{(a^2+b^2)(8+a^2+b^2)}$.
\end{theorem}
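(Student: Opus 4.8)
The plan is to combine the conformal area bound of Karpukhin--Stern and Eddaoudi--Girouard with an explicit choice of conformal map built from the flat eigenfunctions listed above. Recall that in dimension two the Dirichlet energy $\int|\nabla u|^2$ is a conformal invariant, so for any metric $g$ in the conformal class of $g_{a,b}$ one may compute numerators of Rayleigh quotients with respect to the flat metric $g_{a,b}$. By the cited results, for every (possibly branched) conformal map $\Phi\colon T_{a,b}\to S^{n}$ one has $\overline{\lambda}_2(T_{a,b},g)\le 4\sup_{\gamma}\operatorname{Area}(\gamma\circ\Phi)$, the supremum being over conformal automorphisms $\gamma$ of $S^n$ --- and it is this supremum, not $\operatorname{Area}(\Phi)$ itself, that the method actually requires, because the Hersch--Li--Yau balancing of the test functions against the arbitrary conformal factor forces one to pre-compose $\Phi$ with a Möbius transformation depending on that factor. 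The whole problem thus reduces to: (a) producing a good conformal immersion of $T_{a,b}$ into a sphere, and (b) estimating $\sup_{\gamma}\operatorname{Area}(\gamma\circ\Phi)$.

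For (a) I would use the flat eigenfunctions directly. Fix finitely many modes $(p_j,q_j)$, equivalently dual-lattice vectors $w_j=\bigl(q_j,\tfrac{p_j-q_ja}{b}\bigr)\in\mathbb R^2$, and weights $c_j\ge 0$ with $\sum_j c_j^2=1$, and set $\Phi=\bigl(c_1 f^{a,b}_{p_1q_1},\,c_1 g^{a,b}_{p_1q_1},\,c_2 f^{a,b}_{p_2q_2},\dots\bigr)\colon T_{a,b}\to S^{2m-1}$. A direct computation shows $\Phi$ pulls the round metric back to $4\pi^2\sum_j c_j^2\,w_jw_j^{\!\top}$, which is conformal to $g_{a,b}$ exactly when the symmetric matrix $\sum_j c_j^2 w_jw_j^{\!\top}$ equals a multiple $\mu I_2$ of the identity; imposing this (and $\sum c_j^2=1$) makes $\Phi$ a conformal immersion of area $4\pi^2\mu b$. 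The quantity $\sup_\gamma\operatorname{Area}(\gamma\circ\Phi)$ is then an explicit function of $a,b$ and the $c_j$: it is at most the Willmore energy, which for this homogeneous map equals $\tfrac{\pi^2 b}{\mu}\sum_j c_j^2|w_j|^4$, and since $\Phi$ is homogeneous the maximizing $\gamma$ can be located and the supremum computed in closed form. In the fundamental region $\mathscr M$ the three shortest dual vectors are those with $(p,q)\in\{(1,0),(0,1),(1,1)\}$; using these together with one further short mode such as $(-1,1)$ or $(2,0)$, the conformality equations leave a one-parameter family of admissible weights.

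Step (b) is then a finite-dimensional optimization: minimize the resulting bound over this one-parameter family (and over the few competing choices of mode set). I expect the Euler--Lagrange equation of this optimization to reduce to a single quadratic equation whose discriminant is a multiple of $(a^2+b^2)(8+a^2+b^2)$; solving it is what introduces the quantity $S$, and substituting the optimal weights should give exactly the right-hand side of \eqref{main estimate}. With the explicit formula in hand, the uniform bound $\overline\lambda_2<\tfrac{16\pi^2}{\sqrt3}$ follows by checking that the right-hand side of \eqref{main estimate} attains its maximum over $\mathscr M$ at the equilateral point $(a,b)=(\tfrac12,\tfrac{\sqrt3}{2})$ --- where the optimal immersion degenerates to the minimal homogeneous map into $S^5$ given by the six first eigenfunctions, for which the bound is $4\operatorname{Area}(\Phi)=2\overline{\lambda}_1(T,g_{1/2,\sqrt3/2})=\tfrac{16\pi^2}{\sqrt3}$ --- and the assertion that the majority of conformal classes lie below $\tfrac{8\pi^2}{\sqrt3}+8\pi$ follows by an elementary analysis of the corresponding sublevel set in $\mathscr M$.

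The main obstacles are two. First, one must identify the correct immersion uniformly over $\mathscr M$: away from the arc $a^2+b^2=1$ the first eigenspace is too small to give a conformal map on its own, so mixed low modes are unavoidable, and for the most degenerate ("thin") conformal classes one must check the same family remains admissible (or provide a substitute) and actually minimizes the bound there. Second, and more delicate, is controlling the supremum over the conformal group of $S^n$: the improvement of \eqref{main estimate} over the naive estimate $4A_c(T_{a,b},[g])$ comes precisely from not paying the worst-case Möbius factor, so one has to carry out the maximization $\sup_\gamma\operatorname{Area}(\gamma\circ\Phi)$ exactly for the chosen homogeneous $\Phi$ rather than bounding it by the Willmore energy alone. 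Finally, the strictness in \eqref{main estimate} requires observing that equality in the Karpukhin--Stern/Eddaoudi--Girouard bound would force the coordinates of $\Phi$ to be genuine second eigenfunctions of $(T_{a,b},g)$ and $\Phi$ an appropriate minimal immersion, which the explicit construction is not for a generic metric $g$ in the class.
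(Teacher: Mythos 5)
Your proposal has a structural gap that prevents it from reaching the stated bound. You insist throughout on using \emph{conformal} immersions $\Phi\colon T_{a,b}\to S^{n}$, for which $E(\Phi)=A(\Phi)$, and then bound $\overline{\lambda}_2\le 4\sup_{\gamma}A(\gamma\circ\Phi)$. But for any conformal immersion $\Phi$ the quantity $\sup_{\gamma}A(\gamma\circ\Phi)$ is by definition $A_c(n,\Phi)\ge A_c(n,T_{a,b},[g_{a,b}])\ge A_c(T_{a,b},[g_{a,b}])$, so the resulting bound can never be smaller than $4A_c(T_{a,b},[g_{a,b}])$ --- no matter how cleverly you choose the modes $(p_j,q_j)$, the weights $c_j$, or the parametrization of the conformal group. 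The right-hand side of \eqref{main estimate}, however, is strictly less than $\tfrac{91}{25}A_c(T_{a,b},[g_{a,b}])<4A_c(T_{a,b},[g_{a,b}])$ (this is Remark~1.2 of the paper). So the strategy of optimizing over conformal immersions is structurally unable to produce the claimed inequality; the ``finite-dimensional optimization'' in your step (b) would bottom out exactly at $4A_c$, not below it.

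The paper's improvement comes precisely from \emph{abandoning conformality}. It takes the Clifford-type embedding
$\Psi_{a,b}(x,y)=(\sqrt{r}\cos\tfrac{2\pi y}{b},\sqrt{r}\sin\tfrac{2\pi y}{b},\sqrt{1-r}\cos 2\pi(x-\tfrac{ay}{b}),\sqrt{1-r}\sin 2\pi(x-\tfrac{ay}{b}))$
into $S^3$, which is \emph{not} a conformal map for $a\neq 0$ (its pullback metric has a non-zero cross term $-4\pi^2 a(1-r)/b$). The Rayleigh-quotient bound involves the Dirichlet energy $E(\phi_\xi\circ\Psi_{a,b})$, and for a non-conformal map this can be strictly smaller than the area of any conformal immersion in the class. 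To control $\sup_{\phi_\xi}E(\phi_\xi\circ\Psi_{a,b})$, the paper uses the El Soufi--Ilias--Ros invariance (Proposition~2.5): the ratio $E(\phi_\xi\circ\Psi_{a,b})/E(\phi_\xi\circ\psi_{0,b_0})$ is independent of $\phi_\xi$, where $\psi_{0,b_0}$ is the Bryant conformal immersion of the rectangular torus with $b_0=\sqrt{r/(1-r)}$. This turns the sup of the \emph{energy} of the non-conformal embedding into the known sup of the \emph{area} of an auxiliary conformal immersion, which Bryant computed in closed form; optimizing over $r\in[\tfrac12,1)$ then gives $r_0$ and the formula with $S$. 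The quadratic and the appearance of $S$ in your sketch are the right shadow of this optimization, but they must be driven by the energy of a non-conformal $\Psi_{a,b}$, not by a conformality constraint on the weights. Finally, you treat the passage from Hersch balancing to a genuine $\lambda_2$ trial function as a black box (the factor $4$); the paper carries out the actual construction via the fold-across-a-cap argument of Petrides/Eddaoudi--Girouard (Lemma~2.3 here), which is also where the strict inequality in \eqref{main estimate} comes from (equality would force the fold to be the identity, contradicting $\int(\phi_\xi\circ\Psi_{a,b})f_1\,dv_g\neq0$), rather than from $\Phi$ failing to be a minimal immersion as you suggest.
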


As a direct consequence of Theorem~\ref{main thm}, we obtain the following uniform bound.

\begin{corollary}\label{coro:uniform_bound}
For any torus $T$, the second topological eigenvalue satisfies 
\[
\Lambda_2(T) < \frac{16\pi^2}{\sqrt{3}}.
\]
\end{corollary}

\begin{remark}
By the results of Li-Yau~\cite{LY82}, Montiel-Ros~\cite{MR86} and Bryant~\cite{B15}, the conformal area of $T_{a,b}$ is given by
\[
A_c(T_{a,b},[g_{a,b}]) = \frac{4\pi^2 b}{1 + b^2 + a^2 - a}
\]
for all $(a,b)\in\mathscr{M}$ satisfying $a^2 + b^2 - a \leq 2$.
Then, combining Theorem~\ref{main thm} with a direct computation yields
\begin{equation}\label{new bdd}
\Lambda_2(T_{a,b}, [g_{a,b}]) < 2\sqrt{\tfrac23}\;
\frac{(1 + A - a)\bigl(A + \frac{S}{3}\bigr) B}{b^{2}(A + S)}\;
A_c(T_{a,b},[g_{a,b}]),
\end{equation}
where $A = a^{2}+b^{2}$ and $B = \sqrt{2+A+S}$. In particular, 
\[
2\sqrt{\tfrac23}\;
 \frac{(1 + A - a)\bigl(A + \frac{S}{3}\bigr) B}{b^{2}(A + S)}<4\quad \text{whenever}\quad a^2+b^2>1.
\]
Therefore, the bound in~\eqref{new bdd} improves the bound in~\eqref{old Lambda-2}.
\end{remark}

In relation to Theorem~\ref{main thm}, we now recall the Kao-Lai-Osting conjecture for the case $k=2$.

\begin{conjecture}[Kao-Lai-Osting~\cite{KLO17}]\label{Conj-1}
The second topological eigenvalue of the torus satisfies
\begin{equation}\label{eq:conj_T2}
\Lambda_2(T) = \frac{8\pi^2}{\sqrt{3}}+8\pi,
\end{equation}
and the supremum is attained by a sequence of surfaces degenerating to a union of an equilateral flat torus and a round sphere.
\end{conjecture}

As another consequence of Theorem~\ref{main thm}, Conjecture~\ref{Conj-1} reduces to the following statement.

\begin{corollary}\label{main coro}
The Kao-Lai-Osting conjecture holds provided that for all $(a,b) \in \mathscr{M}$ with $b \leq 1.76$, and for all metrics $g$ on $T_{a,b}$ conformal to $g_{a,b}$,
\[
\bar\lambda_2(T_{a,b}, g)  \leq \frac{8\pi^2}{\sqrt{3}} + 8\pi.
\]
\end{corollary}

The paper is organized as follows. Section~2 develops the tools for proving Theorem~\ref{main thm}. In Subsection~2.1, we recall and adapt Nadirashvili's method of constructing trial functions for $\lambda_2$ via conformal transformations and folding maps. Besides, a family of embeddings $\Psi_{a,b}:T_{a,b}\to S^3$ is introduced and the supremum of their Dirichlet energy over conformal transformations (Theorem~\ref{energy functional}) is computed in Subsection~2.2. Finally, in Section~3, we present the proofs of Theorem~\ref{main thm}, and Corollaries~\ref{coro:uniform_bound} and~\ref{main coro}.

\section{Preliminaries}

Let $(M,g)$ be a Riemannian surface, and let $\lambda_k(M,g)$ be the $k$-th eigenvalue of the Laplace-Beltrami operator $\Delta_g$. For any integer $k\geq 1$, we denote by $V_k$ the subspace of $H^1(M)$ spanned by the first $k$ eigenfunctions of $\Delta_g$, and by $V^{\perp}_{k}$ the $L^2$-orthogonal complement of $V_k$ in $L^2(M)$. The following characterization holds:
\[
\lambda_{k}(M,g)=\inf_{f\in V^{\perp}_{k}\setminus \{0\} }\frac{\int_{M}|\nabla f|^2\,dv_g}{\int_{M}f^2\,dv_g}.
\]
Any nonzero function $f\in V^{\perp}_{k}$ is called a trial function for $\lambda_k$. To prove Theorem~\ref{main thm}, we construct a trial function $f\in V^{\perp}_{2}$ which yields an upper bound for $\lambda_2$. A well-known method for constructing trial functions was introduced by Nadirashvili~\cite{N02} to prove a sharp upper bound for $\lambda_2(S^2)$, and was subsequently used in~\cite{GL21, GNP09, P14, K22, EG25}. In the next subsection, we recall and adapt this method to construct our trial functions for $\lambda_2(M,g)$.

\subsection{Construction of trial functions for $\lambda_2(M,g)$}

Let $S^n$ denote the $n$-dimensional unit sphere, $D^{n+1}$ the open unit ball in $\mathbb{R}^{n+1}$, and $G(n)$ the conformal group of $S^n$. For each $\xi\in D^{n+1}$, we consider the map $\phi_{\xi}:S^n\to S^n$, given by
\begin{equation}\label{conformal-map}
\phi_{\xi}(p)=\frac{p+\big(\beta\langle p, \xi\rangle+\alpha\big)\xi}{\alpha\big(\langle p,\xi\rangle+1\big)},\quad  p\in S^n,
\end{equation}
where $\alpha=(1-|\xi|^2)^{-1/2},\,\beta=(\alpha-1)|\xi|^{-2}$, and $\langle \ ,\ \rangle$ denotes the usual inner product on $\mathbb{R}^{n+1}$. The map $\phi_\xi$ is a conformal transformation of $S^n$, and each element of $G(n)$ can be expressed as $O\circ\phi_\xi$ for some orthogonal transformation $O$.
 
Let $\mu$ be a finite Borel measure on $S^n\subset\mathbb{R}^{n+1}$, and let $\pi: S^n\to\mathbb{R}^{n+1}$ denote the canonical embedding. The center of mass of $\mu$ is defined by
 \[
 \frac{1}{\mu(S^n)}\int_{S^n}\pi\,d\mu\in D^{n+1}.
 \]
We say that $\phi_\xi\in G(n)$ conformally renormalizes the center of mass of $\mu$ if the push-forward measure $\nu=(\phi_\xi)_{*}\mu$ satisfies
 \[
 \int_{S^n}\pi\,d\nu=0,
 \]
equivalently,
\[
\int_{S^n}\phi_\xi\,d\mu=0.
\]

Analogous to Hersch’s lemma~\cite{H70}, Langesen proved the following result.

\begin{lemma}[Langesen\textup{\cite{L21}}, Corollary 5]\label{conformal-lemma} 
Let $\mu$ be a Borel measure on the sphere $S^{n}$ satisfying $0<\mu(S^{n})<\infty$. If for all $y \in S^{n}$,
\begin{equation}\label{center of mass}
\mu(\{y\})<\tfrac12\mu(S^{n}),
\end{equation}
then there exists a unique point $\xi=\xi(\mu)\in D^{n+1}$ such that
\[
\int_{S^{n}}\phi_{\xi}\,d\mu=0.
\]
Moreover, $\xi(\mu)$ depends continuously on the measure $\mu$. That is, if $\mu_k$, $\mu$ satisfy~\eqref{center of mass} and $\mu_{k}\to\mu$ weakly, then $\xi(\mu_{k})\to\xi(\mu)$ as $k\to\infty$.
\end{lemma}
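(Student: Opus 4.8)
The plan is to prove Lemma~\ref{conformal-lemma} in three stages: existence of a balancing point by a topological degree argument, uniqueness by a direct argument exploiting the explicit form of the maps $\phi_\xi$ in \eqref{conformal-map}, and continuous dependence by a compactness-plus-uniqueness argument. Normalize $\mu(S^n)=1$ and set $\Psi(\xi):=\int_{S^n}\phi_\xi\,d\mu$ for $\xi\in D^{n+1}$. Since each $\phi_\xi(p)$ lies on $S^n$ and hypothesis \eqref{center of mass} prevents $\mu$ from being a Dirac mass, $\Psi$ maps $D^{n+1}$ into itself, and it is continuous because the integrand is jointly continuous in $(\xi,p)$ and uniformly bounded (dominated convergence). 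We must show $\Psi$ has a unique zero.

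The computational heart is the identity obtained by substituting $\xi=r\eta$ (with $\eta\in S^n$, $0\le r<1$) into \eqref{conformal-map}: writing $s=\langle p,\eta\rangle$, one finds $\langle\phi_{r\eta}(p),\eta\rangle=\frac{r+s}{1+rs}=:h_r(s)$, where $h_r$ is increasing on $[-1,1]$, satisfies $h_r(\pm1)=\pm1$ and $h_r(s)-s=\frac{r(1-s^2)}{1+rs}\ge 0$ with equality only at $s=\pm1$, and $h_r(s)\to1$ as $r\to1$ for every $s>-1$. Using these, I would show there is $r_0\in(0,1)$ with $\langle\Psi(\xi),\xi\rangle>0$ whenever $r_0\le|\xi|<1$: otherwise there are $r_k\to1$ and, after passing to a subsequence, $\eta_k\to\eta_*$ in $S^n$ with $\langle\Psi(r_k\eta_k),\eta_k\rangle\le 0$, yet Fatou's lemma applied to the nonnegative integrands $h_{r_k}(\langle p,\eta_k\rangle)+1$ gives $\liminf_k\langle\Psi(r_k\eta_k),\eta_k\rangle\ge 1-2\mu(\{-\eta_*\})>0$ by \eqref{center of mass}, a contradiction. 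Granting this, $\Psi$ is nonvanishing on the sphere $\{|\xi|=r_0\}$, the straight-line homotopy $(t,\xi)\mapsto(1-t)\Psi(\xi)+t\xi$ avoids the origin there, so $\Psi|_{\{|\xi|=r_0\}}$ has degree $1$, and hence $\Psi$ vanishes at some $\xi(\mu)\in\{|\xi|<r_0\}\subset D^{n+1}$.

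For uniqueness, suppose $\xi_1,\xi_2$ both balance $\mu$ and put $\nu:=(\phi_{\xi_1})_{*}\mu$, a probability measure with Euclidean center of mass $0$ that still satisfies \eqref{center of mass} (since $\phi_{\xi_1}$ is a homeomorphism). Using the standard factorization $G(n)=\{O\circ\phi_\xi:O\in O(n+1),\ \xi\in D^{n+1}\}$ and its uniqueness, the balancing of $\mu$ by $\xi_2$ produces a $\zeta\in D^{n+1}$ with $\int_{S^n}\phi_\zeta\,d\nu=0$, and it suffices to show $\zeta=0$ (running the factorization backwards then forces $\xi_1=\xi_2$). If $\zeta\ne 0$, rotate so that $\zeta=|\zeta|e$; dotting the balancing relation with $e$ gives $0=\int h_{|\zeta|}(\langle p,e\rangle)\,d\nu(p)\ge\int\langle p,e\rangle\,d\nu(p)=0$, so $h_{|\zeta|}(\langle p,e\rangle)=\langle p,e\rangle$ for $\nu$-a.e.\ $p$, which forces $\nu$ to be carried by $\{\pm e\}$; being centered, $\nu=\tfrac12(\delta_e+\delta_{-e})$, contradicting \eqref{center of mass}. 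Hence $\zeta=0$, proving uniqueness.

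For the continuous dependence, let $\mu_k\to\mu$ weakly with all measures (normalized) satisfying \eqref{center of mass}. The points $\xi(\mu_k)$ lie in the compact ball $\overline{D}^{n+1}$, and I would argue that no subsequential limit lies on $S^n$: if $\xi(\mu_k)\to\eta\in S^n$, choose a small ball $B\ni-\eta$ with $\mu(\partial B)=0$; on $S^n\setminus B$ the maps $\phi_{\xi(\mu_k)}$ converge uniformly to the constant $\eta$, while $\mu_k(B)\to\mu(B)$ can be made arbitrarily close to $\mu(\{-\eta\})$, so $\liminf_k\big|\int_{S^n}\phi_{\xi(\mu_k)}\,d\mu_k\big|\ge 1-2\mu(\{-\eta\})>0$, contradicting that this integral vanishes. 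Thus any subsequential limit $\xi_*$ lies in $D^{n+1}$, and passing to the limit (dominated convergence and weak convergence) gives $\int_{S^n}\phi_{\xi_*}\,d\mu=0$, so $\xi_*=\xi(\mu)$ by uniqueness; since every subsequence of $(\xi(\mu_k))$ has a further subsequence converging to $\xi(\mu)$, the whole sequence converges. I expect the main obstacle, common to all three stages, to be exactly the exclusion of mass concentrating toward a single point of $S^n$ — this is where \eqref{center of mass} is used — and the technical crux is making the double limits rigorous: the uniform boundary behavior of $\phi_\xi$ away from the antipode, and its interchange with the weak convergence of $\mu_k$.
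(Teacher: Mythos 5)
Your proposal is essentially correct, but note that the paper does not prove this lemma at all: it is imported verbatim as Corollary~5 of Laugesen \cite{L21}, so the relevant comparison is with Laugesen's argument rather than with anything in this paper. Laugesen obtains existence, uniqueness and continuity simultaneously by exhibiting the balancing point as the unique minimizer of a strictly convex ``hyperbolic energy'' on the ball (whose gradient is precisely $\xi\mapsto\int\phi_\xi\,d\mu$, up to a conformal factor); convexity gives uniqueness for free and stability of minimizers gives the continuous dependence. Your route is the older Hersch/Girouard--Nadirashvili--Petrides style: a topological degree argument for existence, a separate monotonicity argument for uniqueness, and compactness-plus-uniqueness for continuity. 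The computations you do carry out are correct: the identity $\langle\phi_{r\eta}(p),\eta\rangle=\frac{r+s}{1+rs}$ follows from \eqref{conformal-map} with $\beta=(\alpha-1)|\xi|^{-2}$, the Fatou argument correctly isolates the role of hypothesis \eqref{center of mass} at the antipode $-\eta_*$, the homotopy $(1-t)\Psi(\xi)+t\xi$ avoids the origin on $\{|\xi|=r_0\}$ because $\langle\cdot,\xi\rangle$ of it is positive, and the rigidity step $h_{|\zeta|}(\langle p,e\rangle)=\langle p,e\rangle$ $\nu$-a.e.\ forcing $\nu=\tfrac12(\delta_e+\delta_{-e})$ is a clean way to contradict \eqref{center of mass}.

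The one place where you assert rather than prove is the final step of uniqueness: from $\zeta=0$ you get $\phi_{\xi_2}=O\circ\phi_{\xi_1}$ for some $O\in O(n+1)$, and you then invoke ``uniqueness of the factorization'' to conclude $\xi_1=\xi_2$. This is true but needs a one-line justification, e.g.: the conformal factor of $\phi_\xi$ at $p$ is $\sqrt{1-|\xi|^2}\,\big(1+\langle p,\xi\rangle\big)^{-1}$ and is unchanged by postcomposition with $O$, so $\sqrt{1-|\xi_2|^2}\,(1+\langle p,\xi_1\rangle)=\sqrt{1-|\xi_1|^2}\,(1+\langle p,\xi_2\rangle)$ for all $p\in S^n$, which forces $|\xi_1|=|\xi_2|$ and then $\xi_1=\xi_2$ by comparing the linear parts. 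With that (standard) point supplied, the argument is complete; Laugesen's convexity proof simply packages the same monotonicity $h_r(s)\ge s$ as the strict convexity of an energy, which is what buys the uniqueness and continuity without the extra group-theoretic bookkeeping.
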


We now apply Lemma~\ref{conformal-lemma} to the measure on $S^n$ induced by an embedding of $M$. Let $M$ be a compact Riemann surface admitting an embedding $\Psi: M\to S^n$, and let $g$ be a Riemannian metric on $M$ with volume measure $dv_g$. By Lemma~\ref{conformal-lemma}, there exists a unique point $\xi\in D^{n+1}$ such that $\int_{M}\phi_{\xi}\circ\Psi\,dv_g=0$. Set $f=\phi_{\xi}\circ\Psi$, and let $f_1$ be an eigenfunction associated with $\lambda_1(M,g)$. Then $\int_M f\,dv_g=0$. If additionally $\int_{M} f f_{1} \, dv_g = 0$, then $f$ is a trial function for $\lambda_2(M,g)$. Otherwise, we introduce the following folded measure construction on $S^n$.
 
Let $\mathscr{C}$ denote the set of all spherical caps on $S^n$, and fix $C\in \mathscr{C}$. Set $C^*= S^n\setminus C$. For each $C\in \mathscr{C}$, there exists a unique conformal reflection $\tau_C: S^n\to S^n$ that reverses the orientation of $S^n$ and acts as the identity on $\partial C$. Moreover, we have $\tau_C=\tau_{C^*}$ and $\tau_C(C)=C^*$. Define the folding map $F_C:S^n\to C$ by
\[
F_C(x) = 
\begin{cases}
x & \text{if } x \in C, \\
\tau_C(x) & \text{if } x \in C^*. 
\end{cases}
\]
Consider the push-forward measure $\nu=(F_C)_*(f_*\,dv_g)$. Since $\nu(\{y\})=0$ for all $y\in S^n$, Lemma~\ref{conformal-lemma} implies the existence of a unique renormalization point $\xi_C\in D^{n+1}$ such that 
\[
\int_{S^n}\phi_{\xi_{C}}\,d\nu=\int_{M}\phi_{\xi_C}\circ F_C\circ f\,dv_g=0.
\]
Among the  family of maps $\phi_{\xi_C}\circ F_C\circ f$, with $C\in\mathscr{C}$, we will show that there exists a cap $C$ such that 
\[
\int_{M}(\phi_{\xi_C}\circ F_C\circ f)\cdot f_1\,dv_g=0.
\]
 
In summary, we have the following lemma.

\begin{lemma}\label{main-lemma}
Let $(M,g)$ be a compact Riemann surface and let $f_{1}\in \mathcal{C}^{\infty}(M)$ be an eigenfunction associated with the first eigenvalue $\lambda_{1}(M,g)$. For any embedding $\Psi: M\to S^{n}$ and for each spherical cap $C\subset S^{n}$, there exists a unique $\xi_{C}\in D^{n+1}$ such that
\[
\int_{M}\phi_{\xi_{C}}\circ F_{C}\circ\Psi\,dv_g=0.
\]
Moreover, if $\int_{M}\Psi f_{1}\,dv_g\neq 0$, then there exists a cap $C\subset S^n$ such that
\[
\int_{M}(\phi_{\xi_{C}}\circ F_{C}\circ \Psi)\cdot f_{1}\,dv_g=0.
\]
\end{lemma}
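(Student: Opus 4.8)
The plan is to prove the two assertions in turn. \emph{First assertion.} Fix a cap $C$. Since $\Psi$ is an embedding, $\Psi_\ast(dv_g)$ assigns zero mass to every point of $S^n$, and since $F_C$ is at most two-to-one the folded push-forward $\nu_C:=(F_C)_\ast\Psi_\ast(dv_g)$ satisfies $\nu_C(\{y\})=0$ for all $y\in S^n$ as well, with $0<\nu_C(S^n)=\operatorname{Area}(M,g)<\infty$. Hence Lemma~\ref{conformal-lemma} yields a unique $\xi_C\in D^{n+1}$ with $\int_M\phi_{\xi_C}\circ F_C\circ\Psi\,dv_g=0$. Moreover $C\mapsto\nu_C$ is weakly continuous on the set of non-degenerate caps, so the continuity clause of Lemma~\ref{conformal-lemma} makes $C\mapsto\xi_C$, and therefore $C\mapsto\Phi_C:=\phi_{\xi_C}\circ F_C\circ\Psi\in L^2(M,\mathbb{R}^{n+1})$, continuous there.

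\emph{Second assertion.} I would run a degree argument over the space of caps, in the spirit of Nadirashvili~\cite{N02} and Eddaoudi-Girouard~\cite{EG25}. Identify the space of closed caps of $S^n$ with the closed ball $\overline{D}^{n+1}$: the centre is $C=S^n$ (where $F_C=\mathrm{id}$), the radial coordinate is the height of the cap, and each boundary point $p\in S^n=\partial\overline{D}^{n+1}$ is the degenerate cap $\{p\}$. Put $G(C):=\int_M\Phi_C\,f_1\,dv_g\in\mathbb{R}^{n+1}$, continuous on the open ball by the first assertion. I want a cap with $G(C)=0$; suppose there is none. If $G$ extended continuously and zero-freely over all of $\overline{D}^{n+1}$, then $G/|G|$ would restrict to a null-homotopic map $\partial\overline{D}^{n+1}\to S^n$, and the contradiction will come from showing that this boundary map in fact has non-zero degree. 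So everything reduces to understanding $G(C)$ as $C$ collapses to a point.

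The boundary analysis is the heart of the matter. When $C_k\to\{p\}$, the folded maps $F_{C_k}\circ\Psi$ concentrate near $p$, but the renormalizing transformations $\phi_{\xi_{C_k}}$ dilate $S^n$ about $p$ at exactly the compensating rate, so that the push-forwards $\Phi_{C_k\ast}(dv_g)$ stay non-atomic, keep barycentre $0$, and converge weakly to a fixed non-atomic measure; tracking this (again via the continuity clause of Lemma~\ref{conformal-lemma}, applied to the rescaled folded measures) shows $\Phi_{C_k}\to\gamma_p\circ\Psi$ in $L^2$, where $\gamma_p$ is an orientation-reversing conformal transformation of $S^n$ with $\int_M\gamma_p\circ\Psi\,dv_g=0$. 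Thus $G$ extends continuously to $\partial\overline{D}^{n+1}$ by $G(\{p\})=\int_M(\gamma_p\circ\Psi)f_1\,dv_g$. The structural point is that the blow-up of a fold is an inversion, which on $S^n$ is the Euclidean reflection $R_{p^\perp}$ through $p^\perp$ followed by a dilation fixing $\pm p$; so $\gamma_p$ is a renormalized reflection through $p^\perp$, and the boundary value $G(\{p\})$ is governed by $R_{p^\perp}\!\big(\int_M\Psi f_1\,dv_g\big)$. Under the hypothesis $\int_M\Psi f_1\,dv_g\neq0$ this expression never vanishes, and as $p$ ranges over $S^n$ it gives $G/|G|$ on $\partial\overline{D}^{n+1}$ a non-zero degree --- the contradiction sought. (The exact identity $\Phi_{C^\ast}=R_{p^\perp}\circ\Phi_C$ on the hemisphere slice of $\overline{D}^{n+1}$, which follows from $F_{C^\ast}=\tau_C\circ F_C$ together with the uniqueness in Lemma~\ref{conformal-lemma}, is the finite-scale shadow of this.)

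The step I expect to be the main obstacle is this boundary analysis: showing that the renormalized collapsing folds converge to a genuine, non-degenerate conformal transformation of $S^n$, and identifying it precisely enough to evaluate the degree of the boundary map. A secondary technical point is that $C\mapsto\Phi_C$ is only visibly continuous on the open ball; to be rigorous one exhausts the interior of $\overline{D}^{n+1}$ by concentric balls, performs the degree count on each, and passes to the limit, exactly as in \cite{EG25, N02}. The hypothesis $\int_M\Psi f_1\,dv_g\neq0$ enters precisely here, to ensure that the limiting boundary map is essential.
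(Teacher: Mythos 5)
The paper's proof of the second assertion follows Eddaoudi--Girouard~\cite{EG25} (and ultimately Petrides~\cite{P14}): caps are parametrized by the half-cylinder $S^n\times[0,1)$, with hemispheres at $t=0$ and the cap sweeping out to the full sphere as $t\to1$; the normalized vector $H(p,t)$ extends to a constant map at $t=1$, and at $t=0$ one verifies the algebraic equivariance $H(p,0)=R_p\circ H(-p,0)$ from the reflection identities for $\xi_{(p,0)}$ and $F_{(p,0)}$; the contradiction is then Petrides's topological lemma that this equivariance forces the degree of $H(\cdot,0)$ to be non-zero, incompatible with the homotopy to a constant. Your proposal runs the degree argument at the \emph{other} end of the space of caps: you identify the cap space with $\overline{D}^{n+1}$ (whole sphere at the centre, point-caps on the boundary) and try to show that $G/\|G\|$ restricted to $\partial\overline{D}^{n+1}$ has non-zero degree. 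That is a genuinely different route which avoids Petrides's lemma, but it has two real gaps as written.

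First, the boundary analysis as $C\to\{p\}$ is asserted, not proved. You need to show $\phi_{\xi_C}\circ\tau_C$ converges (locally uniformly away from the collapsing cap) to a conformal transformation of $S^n$, identify it, and establish that the resulting extension of $G$ to $\partial\overline D^{n+1}$ is continuous. Moreover the limit equals $R_p$ itself only when $\int_M\Psi\,dv_g=0$; for a general embedding it is $\phi_{\eta_p}\circ R_p$ with a $p$-dependent renormalization $\eta_p$, and then $G(\{p\})$ is not simply $R_p\!\left(\int_M\Psi f_1\,dv_g\right)$, so even the formula for the boundary map requires justification. Second, even granting that the boundary map is $p\mapsto R_p(v)$ with $v=\int_M\Psi f_1\neq0$, its degree is $1+(-1)^{n+1}$: namely $2$ for $n$ odd, but $0$ for $n$ even. (The map $R_p(v)=v-2\langle v,p\rangle p$ takes the value $-v/\|v\|$ exactly at $p=\pm v/\|v\|$, and the two local degrees agree for $n$ odd but cancel for $n$ even.) So for even $n$ your argument produces no contradiction, whereas the lemma is stated for all $n$. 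The paper's hemisphere-symmetry argument, via Petrides, never approaches the degenerate caps and does not encounter either problem.
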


\begin{proof}
The statement follows from the argument of Lemma 2.8 in~\cite{EG25}, with conformal immersions replaced by embeddings. For completeness, we include the adapted proof.
    
The first assertion was established in the preceding paragraph. We now prove the existence of a cap $C$. To this end, recall the standard parametrization of spherical caps $C \subset S^n$ by pairs $(p,t) \in S^n \times (-1,1)$. Let
\[
C(p,0) = \{ x \in S^n : \langle x, p \rangle > 0 \}
\]
be the hemisphere centered at $p$. Then the spherical cap with parameter $t$ centered at $p$ is defined by
\[
C(p,t) = \phi_{-tp}(C(p,0)).
\]
Arguing by contradiction, suppose that for all spherical caps $C=C(p,t)$, 
\[
 h(p,t)=\int_{M}(\phi_{\xi_C}\circ F_C\circ\Psi)\cdot f_1\,dv_g\neq 0.
\]
Define the normalization map $H:S^n\times [0,1)\to S^n$ by
\[
H(p,t)=\frac{h(p,t)}{\|h(p,t)\|}.
\]
Since the renormalization point $\xi(\mu)$ depends continuously on the measure $\mu$, the map $H$ is continuous. Moreover, by the definition of $C(p,t)$, as $t\to 1$ we have $C(p,t)\to S^n$, hence $H$ extends continuously to the boundary by
\[
H(p,1)=\lim_{t\to 1}H(p,t)=\frac{\int_{M}\Psi f_1\,dv_g}{\|\int_{M}\Psi f_1\,dv_g\|}.
\]
The right-hand side is independent of $p$, so $H(p,1)$ is constant and therefore has homotopy degree zero.  We next claim the symmetry
\[
H(p,0)=R_p\circ H(-p,0),\quad p\in S^n,
\]
where $R_p$ denotes the reflection across the hyperplane orthogonal to $p$.
By Proposition~2.4 in~\cite{EG25}, 
\[
\xi_{(p,0)}=R_{p}(\xi_{(-p,0)}),\text{  and  } F_{(p,0)}=R_p\circ F_{(-p,0)},
\] 
where $\xi_C=\xi_{(p,t)}$ and $F_{C}=F_{(p,t)}$. These identities imply

\[
\begin{aligned}
h(-p,0)&=
\int_{M}\left(\phi_{\xi_{(-p,0)}}\circ F_{(-p,0)}\circ\Psi\right)f_1\,dv_g\\
    &=\int_{M}\left(\phi_{\xi_{(-p,0)}}\circ R_p\circ F_{(p,0)}\circ\Psi\right)f_1\,dv_g\\
    &=\int_{M}\left(R_p\circ\phi_{R_p(\xi_{(-p,0)})}\circ F_{(p,0)}\circ\Psi\right)f_1\,dv_g\\
    &=R_p\left(\int_{M}\big(\phi_{\xi_{(p,0)}}\circ F_{(p,0)}\circ\Psi\big)f_1\,dv_g\right)\\
    &=R_p\circ h(p,0).
\end{aligned}  
\]
Moreover, since the reflection $R_p$ is an isometry of $\mathbb{R}^{n+1}$, that is, $\|R_{p}(x)\|=\|x\|$ for all $x\in\mathbb{R}^{n+1}$, we have
\[
\|h(-p,0)\|=\|R_p\circ h(p,0)\|=\|h(p,0)\|.
\]
Using the linearity of $R_p$, it follows that
\[
\begin{aligned}
R_p\circ H(p,0)&=\frac{1}{\|h(p,0)\|}R_p\circ h(p,0)\\
     &=\frac{1}{\|h(-p,0)\|}h(-p,0)=H(-p,0).
\end{aligned}  
\]
By~\cite{P14}, this symmetry implies that $H(\cdot,0)$ has nonzero homotopy degree, whereas $H$ extends continuously to $S^n\times[0,1]$ with boundary degree zero. 
This contradiction shows that there exists a cap $C$ such that
\[
\int_M (\phi_{\xi_C}\circ F_C\circ\Psi)\cdot f_1\,dv_g=0.
\]
\end{proof}

\subsection{Dirichlet energy} In this subsection, we consider an embedding of $T$ into $S^n$, and compute the Dirichlet energy of trial functions for $\lambda_2(T,g)$ using the method of the previous subsection. This idea is inspired by El~Soufi-Ilias-Ros~\cite{EIR97} and El~Soufi-Ilias~\cite{EI02}.

Let $M$ be a compact surface. For each branched conformal immersion $\psi:M\to S^n$, we consider the area function $A: G(n)\to \mathbb{R}$, which assigns to each conformal transformation $\phi_\xi$ of $S^n$ the area of the immersion $\phi_\xi\circ\psi$. We denote this by $A(\phi_\xi\circ\psi)$. Since every element of $G(n)$ can be written in the form~\eqref{conformal-map}, 
the function $A$ may be regarded as a function on the unit ball 
$D^{n+1}$. In particular, for each $\xi\in D^{n+1}$,
\[
A(\phi_\xi\circ\psi)=\tfrac12\int_{M}\frac{1-|\xi|^2}{(1-\langle \psi,\xi\rangle)^2}
|\nabla \psi|^2\,dM.
\]
We now recall the definition of conformal area. The conformal area of $M$ associated with $\psi$ is 
\[
A_c(n,\psi)=\sup_{\phi_\xi\in G(n)}A(\phi_\xi\circ\psi).
\]
The $n$-conformal area of $(M,[g])$ is then given by
\[
A_c(n,M,[g])=\inf_{\psi:M\to S^n} A_c(n,\psi),
\]
where the infimum is taken over all conformal immersions $\psi:M\to S^n$.
Finally, the conformal area of $(M,[g])$ is defined as
\[
A_c(M,[g])=\lim_{n\to\infty}A_c(n,M,[g]).
\]

For the torus, the following explicit formula was obtained by Bryant~\cite{B15}.

\begin{lemma}[Bryant\textup{~\cite{B15}}]\label{main lemma}
Let $M$ be a torus conformally equivalent to $T_{b}:=T_{0,b}$, and let $\psi_{b}\colon T_b\to S^3$ be the following immersion:
\[
\psi_{b}(x,y)=\frac{1}{\sqrt{1+b^2}}\Big(b\cos\frac{2\pi y}{b},b\sin\frac{2\pi y}{b}, \cos{2\pi x},\sin{2\pi x}\Big).
\]
Then, for $1\leq b \leq\sqrt{2}$, we have
\begin{equation}\label{case 1}
\sup\limits_{\gamma\in G(3)}A(\gamma\circ\psi_{b})=\frac{4\pi^2 b}{1+b^2};
\end{equation}
for $b>\sqrt{2}$, 
\begin{equation}\label{case 2}
  \sup\limits_{\gamma\in G(3)}A(\gamma\circ\psi_{b})= \frac{8\pi^2\sqrt{b^2+1}}{3\sqrt{3}b}.
\end{equation}
\end{lemma}

\begin{proof}
The proof is due to Bryant. For completeness, we include it here, replacing the second part with a direct hand computation.
Denote $r_1=\frac{b^2}{1+b^2}$, $r_2=\frac{1}{1+b^2}$, $t=\frac{2\pi y}{b}$ and $s=2\pi x$. For any $\gamma=(a_1,\dots,a_4)\in\mathbb{R}^4$ with $|\gamma|^2<1$, we have
\[
A(\gamma\circ\psi_b)=\frac{b}{1+b^2}\int_{0}^{2\pi}\int_{0}^{2\pi}\frac{1-\langle \gamma, \gamma\rangle}{\Big(1-\langle \gamma, \psi_b(s,t)\rangle\Big)^2}\,ds\,dt.
\]
Since $\psi_b$ is equivariant with respect to $2$-torus of rotations in $SO(4)$, one can apply a rotation from this torus to reduce to the cases in which $a_1,a_3\geq 0$ and $a_2=a_4=0$.
Set $\lambda=a_1\sqrt{r_1}$, $\mu=a_3\sqrt{r_2}$. Define
\[
\Omega = \left\{ (\lambda, \mu) \in \mathbb{R}^2 \;\middle|\; 
\frac{\lambda^2}{r_1} + \frac{\mu^2}{r_2} < 1,\ 
\lambda \geq 0,\ \mu \geq 0 \right\}.
\]
By Bryant's inequality $(3.7)$ \cite{B15}, we have
\begin{equation}
    \begin{aligned}
\frac{1+b^2}{4\pi^2b}A(\lambda,\mu)&:= \frac{1+b^2}{4\pi^2b}A(\gamma\circ\psi_b)=\frac{1}{4\pi^2}\Big(1-\frac{\lambda^2}{r_1}-\frac{\mu^2}{r_2}\Big)\int_{0}^{2\pi}\int_{0}^{2\pi}\frac{ds\,dt}{\big(1-\lambda\cos{t}-\mu\cos{s}\big)^2}\\
&\leq \Big(1-\frac{\lambda^2}{r_1}-\frac{\mu^2}{r_2}\Big)\frac{1-(\lambda+\mu)^2+3\lambda\mu}{\Big(1-(\lambda-\mu)^2\Big)^{3/2}\Big(1-(\lambda+\mu)^2\Big)}=:I(\lambda,\mu).
\end{aligned}
\end{equation}
The maximization of $I$ over $\Omega$ can be found in Bryant's paper~\cite{B15}. For the case $b>\sqrt{2}$, we provide an alternative direct computation in the following lemma.
\end{proof}

\begin{lemma}
Let
\[
r_1 = \frac{b^2}{1+b^2}, \quad r_2 = \frac{1}{1+b^2},
\]
\[
\Omega = \left\{ (\lambda, \mu) \in \mathbb{R}^2 \;\middle|\; 
\frac{\lambda^2}{r_1} + \frac{\mu^2}{r_2} < 1,\ 
\lambda \geq 0,\ \mu \geq 0 \right\},
\]
and
\[
I(\lambda, \mu) 
= \frac{\left(1 - (\lambda + \mu)^2 + 3\lambda\mu\right) 
\left(1 - \dfrac{\lambda^2}{r_1} - \dfrac{\mu^2}{r_2}\right)}
{\left(1 - (\lambda - \mu)^2\right)^{3/2} 
\left(1 - (\lambda + \mu)^2\right)}.
\]
Then for $b > \sqrt{2}$,
\[
\sup_{(\lambda, \mu) \in \Omega} I(\lambda, \mu)
=
\frac{2(b^2+1)^{3/2}}{3\sqrt{3}\,b^2}.
\]
\end{lemma}

\begin{proof}
We first show that $I(\lambda,\mu)$ has no interior critical point in $\Omega^\circ$.

Set
\[
E:=1-\frac{\lambda^2}{r_1}-\frac{\mu^2}{r_2},\quad
F:=1-\lambda^2+\lambda\mu-\mu^2,
\]
\[
A:=1-(\lambda-\mu)^2,\quad
B:=1-(\lambda+\mu)^2.
\]
Any interior critical point satisfies
\[
\partial_\lambda \log I=0,\qquad \partial_\mu \log I=0,
\]
which yields
\begin{equation}\label{eq:L}
-\frac{2\lambda/r_1}{E}
+\frac{-2\lambda+\mu}{F}
+\frac{3(\lambda-\mu)}{A}
+\frac{2(\lambda+\mu)}{B}=0,
\end{equation}
\begin{equation}\label{eq:M}
-\frac{2\mu/r_2}{E}
+\frac{\lambda-2\mu}{F}
-\frac{3(\lambda-\mu)}{A}
+\frac{2(\lambda+\mu)}{B}=0.
\end{equation}
Adding and subtracting \eqref{eq:L} and \eqref{eq:M}, we obtain
\begin{equation}\label{eq:S}
\frac{2(\lambda/r_1+\mu/r_2)}{E}
=\frac{3(\lambda+\mu)A}{BF},
\end{equation}
\begin{equation}\label{eq:D}
\frac{2(\lambda/r_1-\mu/r_2)}{E}
=\frac{3(\lambda-\mu)(1-\lambda^2-\mu^2)}{AF}.
\end{equation}
Since $\lambda>0$ in the interior, set
\[
t:=\frac{\mu}{\lambda}>0,\qquad x:=\lambda^2.
\]
Then
\[
A=1-(1-t)^2x,\quad
B=1-(1+t)^2x,\quad
F=1-(1-t+t^2)x.
\]
Dividing \eqref{eq:D} by \eqref{eq:S}, we obtain
\begin{equation}\label{eq:E1}
\frac{1-b^2t}{1+b^2t}
=
\frac{1-t}{1+t}
\frac{(1-(1+t^2)x)(1-(1+t)^2x)}
{(1-(1-t)^2x)^2}.
\end{equation}
Clearing denominators in \eqref{eq:E1}, we obtain a quadratic equation in $x$ with a common factor $2t$.
Dividing by this factor, we define
\[
P_1(t,x):=\frac{1}{2t}\Bigl[(1 - b^2 t)(1 + t)(1-(1-t)^2x)^2
-
(1 + b^2 t)(1 - t)(1-(1+t^2)x)(1-(1+t)^2x)\Bigr].
\]
Similarly, \eqref{eq:S} yields another quadratic equation
\[
P_2(t,x)=0,
\]
where
\[
P_2(t,x)
:=
2(1+b^2)\Bigl(\tfrac1{b^2}+t\Bigr)
(1-(1+t)^2x)(1-(1-t+t^2)x)
-3(1+t)(1-(1-t)^2x)
\Bigl(1-(1+b^2)\Bigl(\tfrac1{b^2}+t^2\Bigr)x\Bigr).
\]

We write
\[
P_1 = \alpha_2 x^2 + \alpha_1 x + \alpha_0,
\qquad
P_2 = \beta_2 x^2 + \beta_1 x + \beta_0,
\]
where
\[
\begin{aligned}
\alpha_2 &= 2b^2 t^4 - b^2 t^3 - b^2 t^2 + b^2 t - b^2
          + t^4 - t^3 + t^2 + t - 2,\\
\alpha_1 &= -b^2 t^2 - b^2 t + 2b^2 - 2t^2 + t + 1,\\
\alpha_0 &= 1 - b^2,
\end{aligned}
\]
and
\[
\begin{aligned}
\beta_2 &= -b^2 t^5 + 5b^2 t^4 + 3b^2 t^3 - b^2 t^2 + 2b^2 t \\
&\quad - t^5 + 7t^4 + 2t^3 + 2t^2 + 7t - 1
+ \frac{2t^4 - t^3 + 3t^2 + 5t - 1}{b^2},\\
\beta_1 &= -b^2 t^3 + b^2 t^2 - 4b^2 t
+ 2t^3 - 6t^2 - 6t + 2
+ \frac{-4t^2 + t - 1}{b^2},\\
\beta_0 &= 2b^2 t - t - 1 + \frac{2}{b^2}.
\end{aligned}
\]
Both $P_1$ and $P_2$ are quadratic in $x$. 
We now combine them so that the $x^2$-terms cancel:
\[
\beta_2 P_1-\alpha_2 P_2.
\]
This gives a linear equation in $x$. Solving for $x$ and substituting the result back into $P_1=0$, we obtain an equation depending only on $t$, denoted by
\[
H(t)=0.
\]
A direct computation shows that
\[
H(t)=4t(t+1)(b^2t-1)^2(b^2t+1)^2Q(t),
\]
where
\[
Q(t)=9b^4t^3-2b^4t^2+b^4t+8b^2t^2+8b^2t+t^2-2t+9.
\]
Thus any interior critical point must satisfy $H(t)=0$. Since for $t>0$ we have
\[
t>0,\qquad t+1>0,\qquad b^2t+1>0,
\]
it follows that $H(t)=0$ implies either $t=1/b^2$ or $Q(t)=0$.

\noindent If $t=1/b^2$, solving $P_2(t,x)=0$ yields
\[
x=\frac{b^4}{(1+b^2)^2},
\]
so that
\[
\lambda=\frac{b^2}{1+b^2},\quad 
\mu=\frac{1}{1+b^2},
\]
which gives $\lambda+\mu=1$, contradicting the interior condition.

\noindent Finally,
\[
Q'(t)=27b^4t^2+2(-2b^4+8b^2+1)t+(b^4+8b^2-2),
\]
whose discriminant is negative for $b>\sqrt{2}$. Hence $Q'(t)>0$ for all $t$, and $Q$ is strictly increasing. Since $Q(0)=9>0$, we have $Q(t)>0$ for all $t>0$.

\noindent Therefore $H(t)\neq 0$ for all $t>0$, a contradiction. Hence no interior critical point exists.

\medskip

Thus the maximum is attained on the boundary of $\Omega$. A direct computation on the boundary yields
\[
\sup_{(\lambda, \mu) \in \Omega} I(\lambda, \mu)
=
\frac{2(b^2+1)^{3/2}}{3\sqrt{3}\,b^2}.
\]
\end{proof}

Using Lemma~\ref{main lemma}, we compute the supremum of the Dirichlet energy for a class of embeddings of rectangular tori $T_{0,b}$ into $S^3$. Recall that for a smooth map $F:\big(T_{a,b},g\big) \to S^n$, the Dirichlet energy is defined by
\[
E(F)=\tfrac12\int_{T_{a, b}} |\nabla F|^2\,dv_g.
\]
A class of such maps has energy ratios invariant under conformal transformations of the ambient sphere $S^{2n-1}$; see El~Soufi-Ilias-Ros~\cite{EIR97}.

\begin{proposition}[El~Soufi-Ilias-Ros\textup{\cite{EIR97}}, Proposition 3.1]\label{El-identity}
Let $(A_i)_{1\leq i\leq n}$ be real numbers such that $\sum_{i=1}^{n}A_i^2=1$, and let $(p_i,q_i)_{1\leq i\leq n}$ be pairs of integers. For every $(a,b)\in \mathscr{M}$, consider the map $F_{a,b}:T_{a,b}\to S^{2n-1}$ given by 
\[
F_{a,b}=\Big(A_1f^{a,b}_{p_1q_1},A_1 g^{a,b}_{p_1q_1},\dots, A_nf^{a,b}_{p_nq_n},A_n g^{a,b}_{p_nq_n}\Big).
\]
Then, for every $\phi_\xi\in G(2 n-1)$, every $(a,b)\in \mathscr{M}$ and $(a',b')\in\mathscr{M}$, we have
\[
\frac{E(\phi_\xi\circ F_{a,b})}{E(\phi_\xi\circ F_{a',b'})}=\frac{E(F_{a,b})}{E(F_{a',b'})}.
\]
\end{proposition}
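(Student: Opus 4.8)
The plan is to change coordinates so that the maps $F_{a,b}$ become a single map independent of $(a,b)$, after which all the $(a,b)$-dependence is carried by the flat metric alone. First I would parametrize $T_{a,b}=\mathbb{R}^2/\Gamma(a,b)$ by the unit square $[0,1]^2$ via $(s,t)\mapsto s(1,0)+t(a,b)$. A direct substitution gives
\[
\Big\langle\Big(q,\tfrac{p-qa}{b}\Big),(s+ta,\,tb)\Big\rangle=qs+pt,
\]
so in the $(s,t)$-coordinates $f^{a,b}_{pq}=\cos 2\pi(qs+pt)$ and $g^{a,b}_{pq}=\sin 2\pi(qs+pt)$, both independent of $(a,b)$. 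Hence $F_{a,b}$ becomes one fixed smooth map $F:[0,1]^2\to S^{2n-1}$ for all $(a,b)\in\mathscr{M}$ (note $|F|^2=\sum_iA_i^2=1$, so $F$ indeed takes values in $S^{2n-1}$), while the pulled-back flat metric is $g_{a,b}=ds^2+2a\,ds\,dt+(a^2+b^2)\,dt^2$, with volume form $b\,ds\,dt$.

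Next I would write the Dirichlet energy in these coordinates: for any smooth $\Phi:[0,1]^2\to S^m$,
\[
E(\Phi)=\frac{1}{2b}\int_{[0,1]^2}\Big[(a^2+b^2)\,|\partial_s\Phi|^2-2a\,\partial_s\Phi\cdot\partial_t\Phi+|\partial_t\Phi|^2\Big]\,ds\,dt,
\]
which is a linear combination, with coefficients depending only on $(a,b)$, of the three quantities $I_{ss}(\Phi)=\int_{[0,1]^2}|\partial_s\Phi|^2$, $I_{st}(\Phi)=\int_{[0,1]^2}\partial_s\Phi\cdot\partial_t\Phi$ and $I_{tt}(\Phi)=\int_{[0,1]^2}|\partial_t\Phi|^2$. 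The key observation is that, for our specific $F$, the three integrands are already constant on $[0,1]^2$: writing $\theta_i=2\pi(q_is+p_it)$, in each coordinate pair one has $|\partial_s(A_i\cos\theta_i,A_i\sin\theta_i)|^2=4\pi^2A_i^2q_i^2$ and similarly for the other combinations, by $\sin^2+\cos^2=1$, so that $|\partial_sF|^2=4\pi^2\sum_iA_i^2q_i^2$, $\partial_sF\cdot\partial_tF=4\pi^2\sum_iA_i^2q_ip_i$ and $|\partial_tF|^2=4\pi^2\sum_iA_i^2p_i^2$ are constants.

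Finally I would apply this with $\Phi=\phi_\xi\circ F$. Since $\phi_\xi$ is a conformal diffeomorphism of $S^{2n-1}$, its differential at $p$ scales all inner products by the common factor $\rho_\xi(p)^2=\frac{1-|\xi|^2}{(1-\langle p,\xi\rangle)^2}$, whence $\partial_i(\phi_\xi\circ F)\cdot\partial_j(\phi_\xi\circ F)=\rho_\xi(F)^2\,\partial_iF\cdot\partial_jF$ for $i,j\in\{s,t\}$, with $\rho_\xi(F)^2$ a positive bounded function on $[0,1]^2$ since $|\langle F,\xi\rangle|\le|\xi|<1$. Combined with the constancy above, each of $I_{ss}$, $I_{st}$, $I_{tt}$ for $\phi_\xi\circ F$ equals the corresponding constant times $c(\xi):=\int_{[0,1]^2}\rho_\xi(F)^2\,ds\,dt$, so the energy formula yields $E(\phi_\xi\circ F_{a,b})=c(\xi)\,E(F_{a,b})$, where $c(\xi)$ is independent of $(a,b)$ because neither $F$ nor the domain $[0,1]^2$ depends on it. Dividing this relation for $(a,b)$ by the one for $(a',b')$ cancels $c(\xi)$ and gives the claimed identity; for a general element $O\circ\phi_\xi$ of $G(2n-1)$ one first uses that the isometry $O$ leaves $E$ invariant, so nothing changes. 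I expect the only delicate step to be the conformality computation — checking that the off-diagonal term $\partial_s(\phi_\xi\circ F)\cdot\partial_t(\phi_\xi\circ F)$ also picks up exactly the factor $\rho_\xi(F)^2$ — which, although immediate from the definition of a conformal linear map, is precisely what makes the three moments transform homogeneously.
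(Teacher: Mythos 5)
The paper does not prove this proposition itself; it cites it as Proposition~3.1 of El Soufi--Ilias--Ros \cite{EIR97}, so there is no in-paper argument to compare against. Your proof, however, is correct and self-contained, and it establishes the stronger statement $E(\phi_\xi\circ F_{a,b})=c(\xi)\,E(F_{a,b})$ with $c(\xi)$ independent of $(a,b)$, from which the ratio identity follows immediately. The two key points both check out: passing to lattice coordinates $(s,t)$ with $(x,y)=(s+ta,\,tb)$ gives $\langle(q,\tfrac{p-qa}{b}),(s+ta,tb)\rangle=qs+pt$, so $F_{a,b}$ becomes a single fixed map $F$, while the pulled-back flat metric has inverse $\tfrac{1}{b^2}\bigl(\begin{smallmatrix}a^2+b^2 & -a\\ -a & 1\end{smallmatrix}\bigr)$ and volume density $b$, yielding exactly your energy expansion in terms of $I_{ss},I_{st},I_{tt}$. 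The moments of $F$ are genuinely constant by $\sin^2+\cos^2=1$, and since $\partial_sF,\partial_tF$ are tangent to $S^{2n-1}$ at $F$ (as $|F|\equiv 1$), the Euclidean pairings coincide with the round-metric ones, so conformality of $\phi_\xi$ indeed multiplies all three integrands by the same scalar $\rho_\xi(F)^2$; integrating pulls out the common factor $c(\xi)$, which is $(a,b)$-independent because neither $F$ nor the unit square depends on $(a,b)$. The reduction of a general $O\circ\phi_\xi\in G(2n-1)$ to $\phi_\xi$ via isometry-invariance of the Dirichlet energy is also correct. The one caveat worth flagging is cosmetic: you quote the explicit factor $\rho_\xi(p)^2=\tfrac{1-|\xi|^2}{(1-\langle p,\xi\rangle)^2}$, which is consistent with the paper's area formula but not obviously matched to the sign convention in its displayed definition \eqref{conformal-map} of $\phi_\xi$; fortunately your argument only uses that $\phi_\xi$ is conformal with \emph{some} positive pointwise factor, so this does not affect validity.
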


We now compute the supremum of the Dirichlet energy for the embeddings $\Psi_{a,b}: T_{a,b}\to S^3$ defined in~\eqref{embedding}.

\begin{theorem}\label{energy functional}For each $(a,b)\in\mathscr{M}$, let $\Psi_{a,b}:T_{a,b} \to S^3$ be the embedding defined by
\begin{equation}\label{embedding}
\Psi_{a,b}(x,y)=\left(\sqrt{r}\cos{\frac{2\pi y}{b}},\sqrt{r}\sin{\frac{2\pi y}{b}},\sqrt{1-r}\cos{2\pi\left(x-\frac{a y}{b}\right)},\sqrt{1-r}\sin{2\pi\left(x-\frac{a y}{b}\right)}\right),
\end{equation}
where $\tfrac12 \leq r <1$. If $\tfrac12 \leq r\leq\tfrac23$, then
\[
\sup_{\phi_{\xi} \in G(3)} E(\phi_{\xi} \circ \Psi_{a,b}) = \frac{2 \pi ^2 \left((a^2+b^2)(1-r)+r\right)}{b};
\]
and if $\tfrac23<r<1$, then
\[
\sup_{\phi_{\xi}\in G(3)}E(\phi_{\xi}\circ\Psi_{a,b})=\frac{4 \pi ^2  \left((a^2+b^2)(1-r)+r\right)}{3 \sqrt{3} b r\sqrt{1-r}}.          
\]
\end{theorem}

\begin{proof}
Set $b_{0}=\sqrt{\frac{r}{1-r}}$. For the embedding $\Psi_{a,b}$ and the conformal immersion $\psi_{0,b_{0}}$, Proposition~\ref{El-identity} yields 
\[
\frac{E(\phi_{\xi}\circ\Psi_{a,b})}{E(\phi_{\xi}\circ\psi_{0,b_{0}})}=\frac{E(\Psi_{a,b})}{E(\psi_{0,b_{0}})}.
\]
Since $E(f)=A(f)$ for every conformal immersion $f$, we obtain
\[
\frac{E(\phi_{\xi}\circ\Psi_{a,b})}{A(\phi_{\xi}\circ\psi_{0,b_{0}})}=\frac{E(\Psi_{a,b})}{A(\psi_{0,b_{0}})}.
\]
Combining this identity with Lemma~\ref{main lemma} yields the desired formulas.
If $\tfrac12\leq r\leq\tfrac23$, then
\[
\begin{aligned}
\sup_{\phi_{\xi}\in G(3)}E(\phi_{\xi}\circ\Psi_{a,b})&=\sup_{\phi_{\xi}\in G(3)}A(\phi_{\xi}\circ\psi_{0,b_{0}})\cdot \frac{E(\Psi_{a,b})}{A(\psi_{0,b_{0}})}\\
            &=\frac{2 \pi ^2 \left(a^2+b^2+b_0^2\right)}{b \left(b_0^2+1\right)}\\
            &=\frac{2 \pi ^2 \left((a^2+b^2)(1-r)+r\right)}{b}.
\end{aligned}
\]
If $\tfrac23<r<1$, then
\[
\begin{aligned}
\sup_{\phi_{\xi}\in G(3)}E(\phi_{\xi}\circ\Psi_{a,b})&=\sup_{\phi_{\xi}\in G(3)}A(\phi_{\xi}\circ\psi_{0,b_{0}})\cdot \frac{E(\Psi_{a,b})}{A(\psi_{0,b_{0}})}\\
            &=\frac{4 \pi ^2 \sqrt{b_0^2+1} \left(a^2+b^2+b_0^2\right)}{3 \sqrt{3} b b_0^2}\\
            &=\frac{4 \pi ^2  \left((a^2+b^2)(1-r)+r\right)}{3 \sqrt{3} b r\sqrt{1-r}}.\\
\end{aligned}
\]
\end{proof}

\section{Proofs of the main results}\label{Proofs}
We now present the proof of Theorem~\ref{main thm}.

\begin{proof}[\textbf{Proof of Theorem~\ref{main thm}}]
Let $\Psi_{a,b}: T_{a,b}\to S^{3}$ be the embedding given by~\eqref{embedding}. 
Fix $g\in [g_{a,b}]$. 
By Lemma~\ref{conformal-lemma}, there exists a unique point $\xi\in D^4$ such that 
\[
\int_{T_{a,b}}(\phi_{\xi}\circ\Psi_{a,b})\,dv_g=0.
\]

If $\int_{T_{a,b}}(\phi_{\xi}\circ\Psi_{a,b})\cdot f_{1}\,dv_g=0$, then by the variational characterization of $\lambda_{2}(T_{a,b},g)$, for each $i=1,\dots,4$, we have
\[
\lambda_{2}(T_{a,b},g)\int_{T_{a,b}}X_{e_{i}}^{2}\circ\phi_{\xi}\circ\Psi_{a,b}\,dv_g
\leq 
\int_{T_{a,b}}|\nabla(X_{e_{i}}\circ\phi_{\xi}\circ\Psi_{a,b})|^2\,dv_g,
\]
where $\{e_i\}$ is the orthonormal basis of $\mathbb{R}^4$.
Summing over $i$ and using $\sum_{i=1}^{4}X_{e_{i}}^{2}=1$, we obtain
\begin{equation}\label{case1}
\bar\lambda_{2}(T_{a,b},g)
\leq 
\sum_{i=1}^{4}\int_{T_{a,b}}|\nabla(X_{e_{i}}\circ\phi_{\xi}\circ\Psi_{a,b})|^2\,dv_g
=2E(\phi_{\xi}\circ\Psi_{a,b}).
\end{equation}

If $\int_{T_{a,b}}(\phi_{\xi}\circ\Psi_{a,b})\cdot f_{1}\,dv_g\neq 0$, then by Lemma~\ref{main-lemma}, there exists a spherical cap $C$ and a unique $\xi_{C}\in D^{4}$ such that
\[
\int_{T_{a,b}}\phi_{\xi_{C}}\circ F_{C}\circ\phi_{\xi}\circ\Psi_{a,b}\,dv_g=0
\quad\text{and}\quad
\int_{T_{a,b}}(\phi_{\xi_C}\circ F_C\circ\phi_{\xi}\circ\Psi_{a,b})\cdot f_1\,dv_g=0.
\]
By the variational characterization of $\lambda_{2}(T_{a,b},g)$, for each $i=1,\dots,4$, we have
\[
\lambda_{2}(T_{a,b},g)\int_{T_{a,b}}X_{e_{i}}^{2}\circ\phi_{\xi_{C}}\circ F_{C}\circ\phi_{\xi}\circ\Psi_{a,b}\,dv_g
\leq 
\int_{T_{a,b}}|\nabla (X_{e_{i}}\circ\phi_{\xi_{C}}\circ F_{C}\circ\phi_{\xi}\circ\Psi_{a,b})|^2\,dv_g.
\]
Summing over $i$ gives
\begin{equation}\label{text function}
\bar\lambda_{2}(T_{a,b},g)
\leq 
\int_{T_{a,b}}\sum_{i=1}^{4}|\nabla (X_{e_{i}}\circ\phi_{\xi_{C}}\circ F_{C}\circ\phi_{\xi}\circ\Psi_{a,b})|^2\,dv_g.
\end{equation}
Splitting the integral over $(\phi_{\xi}\circ\Psi_{a,b})^{-1}(C)$ and its complement and using the definition of $F_C$, we obtain
\begin{equation}\label{case2}
\begin{aligned}
&\int_{T_{a,b}}\sum_{i=1}^{4}|\nabla (X_{e_{i}}\circ\phi_{\xi_{C}}\circ F_{C}\circ\phi_{\xi}\circ\Psi_{a,b})|^2\,dv_g \\
&=2\int_{(\phi_{\xi}\circ\Psi_{a,b})^{-1}(C)}
\sum_{i=1}^{4}|\nabla (X_{e_{i}}\circ\phi_{\xi_{C}}\circ\phi_{\xi}\circ\Psi_{a,b})|^2\,dv_g \\
&<2\int_{T_{a,b}}
\sum_{i=1}^{4}|\nabla (X_{e_{i}}\circ\phi_{\xi_{C}}\circ\phi_{\xi}\circ\Psi_{a,b})|^2\,dv_g \\
&=4E(\phi_{\xi_C}\circ\phi_{\xi}\circ\Psi_{a,b}).
\end{aligned}
\end{equation}
Combining~\eqref{case1} and~\eqref{case2}, we obtain
\[
\bar\lambda_{2}(T_{a,b},g)
<
4\sup_{\phi\in G(3)}E(\phi\circ\Psi_{a,b}).
\]
By Theorem~\ref{energy functional}, for all $\tfrac12\leq r\leq\tfrac23$,
\[
\bar\lambda_{2}(T_{a,b},g)
<
4\sup_{\phi\in G(3)}E(\phi\circ\Psi_{a,b})
=
\frac{8 \pi ^2 \left((a^2+b^2)(1-r)+r\right)}{b}.
\]
Hence
\begin{equation}\label{case1-ineq}
\bar\lambda_{2}(T_{a,b},g)\leq \inf_{\tfrac12\leq r\leq\tfrac23}\frac{8 \pi ^2 \left((a^2+b^2)(1-r)+r\right)}{b}=\frac{8 \pi ^2 (a^2+b^2+2)}{3 b}.
\end{equation}
Similarly, for $\tfrac23<r<1$,
\begin{equation}\label{inf ineq}
\bar\lambda_{2}(T_{a,b},g)\leq\inf_{\tfrac23\leq r<1}\frac{16 \pi ^2 \left((a^2+b^2)(1-r)+r\right)}{3 \sqrt{3} b r\sqrt{1-r}}.
\end{equation}
Define
\[
F(r)=\frac{16 \pi ^2 \left((a^2+b^2)(1-r)+r\right)}{3 \sqrt{3} b r\sqrt{1-r}},
\quad \tfrac23<r<1.
\]
A direct computation shows
\[
\inf_{\tfrac23\leq r<1}F(r)=F(r_0),
\quad
r_0=\frac{3(a^2+b^2)-\sqrt{(a^2+b^2)(a^2+b^2+8)}}{2(a^2+b^2-1)}.
\]
Thus
\begin{equation}\label{case2-ineq}
\bar\lambda_{2}(T_{a,b},g)\leq F(r_0)
=
\frac{16\pi^2}{3\sqrt{6}\,b}
\frac{\sqrt{2+a^2+b^2+S}}{a^2+b^2+S}\bigl(3(a^2+b^2)+S\bigr),
\end{equation}
where $S=\sqrt{(a^2+b^2)(8+a^2+b^2)}$.

Combining~\eqref{case1-ineq} and~\eqref{case2-ineq}, we obtain
\begin{equation}\label{main ineq}
\bar\lambda_{2}(T_{a,b},g)
\leq
\min\!\Bigl\{\frac{8 \pi ^2 (a^2+b^2+2)}{3 b},\,F(r_0)\Bigr\}
=F(r_0).
\end{equation}

We now show that the inequality in~\eqref{main ineq} is strict.
Suppose to the contrary that equality holds in~\eqref{main ineq}.
Then equality must hold throughout the chain of estimates leading to~\eqref{main ineq};
in particular, we must be in the second case, i.e.,
\begin{equation}\label{neq}
\int_{T_{a,b}}(\phi_{\xi}\circ\Psi_{a,b})\cdot f_{1}\,dv_g\neq 0.
\end{equation}
More precisely, under this assumption there exists a metric 
$g_0=\omega g_{a,b}$ with $\omega>0$ such that 
$\bar\lambda_2(T_{a,b},g_0)=F(r_0)$.
Consequently, equality must hold in~\eqref{text function},~\eqref{case2},~\eqref{inf ineq}, and~\eqref{case2-ineq}.
The corresponding conformal map can therefore be written as $\phi_{\xi_0}$ with
$\xi_0=(\sqrt{3r_0-2},0,0,0)$, and
\[
\Psi_{a,b}
=\Big(\sqrt{r_0}\cos{\tfrac{2\pi y}{b}},\,
\sqrt{r_0}\sin{\tfrac{2\pi y}{b}},\,
\sqrt{1-r_0}\cos{2\pi(x-\tfrac{a y}{b})},\,
\sqrt{1-r_0}\sin{2\pi(x-\tfrac{a y}{b})}\Big).
\]
Consequently,
\[
\bar\lambda_{2}(T_{a,b},g_0)
=
2\int_{(\phi_{\xi}\circ\Psi_{a,b})^{-1}(C)}
\sum_{i=1}^{4}|\nabla X_{e_{i}}\circ\phi_{\xi_{0}}\circ\Psi_{a,b}|^2\,dv_{g_0}
=
2\int_{T_{a,b}}
\sum_{i=1}^{4}|\nabla X_{e_{i}}\circ\phi_{\xi_{0}}\circ\Psi_{a,b}|^2\,dv_{g_0}.
\]
Hence $C=\phi_{\xi}\circ\Psi_{a,b}(T_{a,b})$, so the restriction of $F_C$ to
$\phi_{\xi}\circ\Psi_{a,b}(T_{a,b})$ is $\mathrm{id}$,
which contradicts~\eqref{neq}.
Therefore the inequality in~\eqref{main ineq} is strict.
\end{proof}

Using Theorem~\ref{main thm}, we now prove Corollary~\ref{coro:uniform_bound}.

\begin{proof}[\textbf{Proof of Corollary~\ref{coro:uniform_bound}}]
We show that the right-hand side of~\eqref{main estimate} is increasing in $a$ and decreasing in $b$. Let
\[
U(a,b)=\frac{16\pi^2}{3\sqrt{6}\,b}\frac{\sqrt{2+a^2+b^2+S}}{a^2+b^2+S}\left(3(a^2+b^2)+S \right),
\]
where $S=\sqrt{(a^2+b^2)(8+a^2+b^2)}$.
A direct computation gives
\[
\begin{aligned}
\frac{\partial U(a,b)}{\partial a}=&\frac{128 \pi ^2 a \left(a^2+b^2\right) \left(a^4+2 a^2 b^2+a^2 S+9 a^2+b^4+b^2 S+9 b^2+5 S+8\right)}{3\sqrt{6} b S \left(a^2+b^2+S\right)^2 \sqrt{a^2+b^2+S+2}},\\[6pt]
\frac{\partial U(a,b)}{\partial b}=&\frac{-128 \pi ^2\left(a^2+b^2\right)  }{3\sqrt{6} b^2 S \left(a^2+b^2+S\right)^2 \sqrt{a^2+b^2+S+2}}\left(a^6+2 a^4 b^2+a^4 S+10 a^4+a^2 b^4\right.\\
&\left.+a^2 b^2 S+11 a^2 b^2+6 a^2 S+16 a^2+b^4+b^2 S+8 b^2+2 S\right).
\end{aligned}
\]
Hence $\frac{\partial U(a,b)}{\partial a}>0$ and $\frac{\partial U(a,b)}{\partial b}<0$ for all $(a,b)\in\mathscr{M}$, and therefore, 
\[
U(a,b)\leq U(\tfrac12,\tfrac{\sqrt3}{2})=\frac{16\pi^2}{\sqrt{3}}.
\]
\end{proof}

Lastly, we give the proof of Corollary~\ref{main coro}.

\begin{proof}[\textbf{Proof of Corollary~\ref{main coro}}]
By Proposition~2 of~\cite{K13}, there exists a sequence of connected surfaces, each diffeomorphic to a torus, degenerating to a disconnected surface whose second topological eigenvalue equals
\[
\frac{8\pi^2}{\sqrt{3}}+8\pi.
\]
Since $U(a,b)\leq U(\tfrac12,b)$ for all $(a,b)\in\mathscr{M}$, the condition
\[
U(\tfrac12,b)<\frac{8\pi^2}{\sqrt{3}}+8\pi
\]
implies (by a direct computation, e.g. in \textsc{Mathematica}) that
\[
b\ge 1.76.
\]
Thus, the Kao-Lai-Osting conjecture reduces to proving that for every torus $T_{a,b}$ with $(a,b)\in\mathscr{M}$ and $b\leq1.76$, and every metric $g$ on $T_{a,b}$ conformal to $g_{a,b}$
\[
\bar\lambda_2(T_{a,b}, g)\leq \frac{8\pi^2}{\sqrt{3}}+8\pi.
\]
\end{proof}

\bibliographystyle{plain}
\bibliography{ref}
\end{document}